\newtheorem*{proposition}{Proposition}
\newtheorem*{corollary}{Corollary}
\title{Integral Recurrences from A to Z}
\author{Robert Dougherty-Bliss \\ {\small \href{mailto:robert.w.bliss@gmail.com}{robert.w.bliss@gmail.com}}}
\date{\today}
\begin{document}
\maketitle

\begin{abstract}
    \noindent George Boros and Victor Moll's masterpiece \emph{Irresistible
    Integrals} does well to include a suitably-titled appendix, ``The
    Revolutionary WZ Method,'' which gives a brief overview of the celebrated
    Wilf--Zeilberger method of definite summation. Paradoxically,
    \emph{Irresistible Integrals} does \emph{not} contain the suitably-titled
    appendix, ``The Revolutionary AZ Method,'' which would have been an
    excellent place to give a brief overview of the Almkvist--Zeilberger method
    of definite \emph{integration}! This omission can be forgiven, but once
    realized it must be rectified. The remarkable AZ machinery deserves to be
    more widely known to the general public than it is. We will do our part by
    presenting a series of case studies that culminate in a---fun but
    overkill---integral-based proof that $e$ is irrational.
\end{abstract}

\begin{center}
    {\it
    Behold, I will stand before thee there upon the rock in Horeb; and thou
    shalt smite the rock and there shall come water out of it, that the people
    may drink.}

    \hfill --- Exodus 17:6
\end{center}

\noindent You have been up all night working out the masterpiece solution to
your latest problem. Your answer depends on the integral sequence
\begin{equation*}
    I(n) = \int_{-\infty}^\infty \frac{x^{2n}}{(x^2 + 1)^{n + 1}}\ dx,
\end{equation*}
which you desperately need to evaluate. You know that you could break out
special functions, contour integrals, or some other method, but you would
really just like a quick answer without much fuss.

You run to download the file \texttt{EKHAD} from
\begin{center}
    \url{https://sites.math.rutgers.edu/~zeilberg/tokhniot/EKHAD}
\end{center}
and read it into Maple with ``\texttt{read EKHAD;}''. You type the command
\begin{verbatim}
    AZd(x^(2 * n) / (x^2 + 1)^(n + 1), x, n, N);
\end{verbatim}
and hardly a second has passed when Maple produces the following:
\begin{verbatim}
                  -2 n - 1 + (2 n + 2) N, -x
\end{verbatim}
You cry out in joy, for the \emph{Almkvist--Zeilberger algorithm} has told you
that your integrand satisfies the ``recurrence''
\begin{equation*}
    (-2n - 1 + (2n + 2) N) \frac{x^{2n}}{(x^2 + 1)^{n + 1}} = -\frac{d}{dx}x \frac{x^{2n}}{(x^2 + 1)^{n + 1}},
\end{equation*}
where $N$ is the shift operator defined by $N f_n(x) = f_{n + 1}(x)$.
Integrating this equation on $(-\infty, \infty)$ gives the identity
\begin{equation*}
    (-2n - 1 + (2n + 2) N) I(n) = 0,
\end{equation*}
which would traditionally be written as
\begin{equation*}
    I(n + 1) = \frac{2n + 1}{2(n + 1)} I(n).
\end{equation*}
You are well-aware that the sequence
\begin{equation*}
    \frac{\pi}{4^n} {2n \choose n}
\end{equation*}
satisfies the same recurrence and initial condition, so you have just proven
that
\begin{equation*}
    I(n) = \int_{-\infty}^\infty \frac{x^{2n}}{(x^2 + 1)^{n + 1}}\ dx = \frac{\pi}{4^n} {2n \choose n}
\end{equation*}
with minimal effort on your part. Such is a normal case study of the \emph{AZ}
algorithm.

In general, we often want to understand the sequence of \emph{definite}
integrals
\begin{equation*}
    I(n) = \int F_n(x)\ dx.
\end{equation*}
Perhaps we would like to compute the first twenty terms or so to see what
$I(n)$ looks like. Sometimes we can ask a computer to churn these out, but
other times $F_n(x)$ is so complicated that even our electronic friends would
struggle to keep up for large $n$. What we need is an \emph{efficient
algorithm} to compute the terms of $I(n)$. We need a \emph{recurrence}.

There are plenty of ad-hoc methods to find a recurrence for $I(n)$. You could
integrate by parts or differentiate under the integral sign, for example. But
these all require ingenuity, insight, and hard work. As Sir Alfred Whitehead
once remarked, such ingenuity is overrated. No one wants to work hard---we want
answers!

The \emph{painless} way to discover these recurrences for large classes of
integrals is the \emph{Almkvist--Zeilberger} algorithm. This is the direct
analog of the celebrated Wilf--Zeilberger method of automatic definite
summation, but it has received less attention than its discrete counterpart.
Our goal here is to explore the Almkvist--Zeilberger algorithm with a few case
studies, leaving the door open for more experimentation.

\section{A quickstart guide to the AZ algorithm}%
\label{sec:a_quickstart_guide_to_the_az_algorithm}

The Wilf--Zeilberger method of definite summation is a breakthrough in
automatic summation techniques. Roughly, the Wilf--Zeilberger method can
automatically prove (and semi-automatically discover) most commonly occurring
summation identites of the form
\begin{equation*}
    S(n) = \sum_k f(n, k) = RHS(n).
\end{equation*}
One piece of the puzzle is that, whenever $f(n, k)$ is a ``suitable'' function,
it satifies a specific type of inhomogenous linear recurrence with polynomial
coefficients in $n$. Exactly, there exists a nonnegative integer $d$ and
polynomials $p_j(n)$ such that
\begin{equation*}
    \sum_{j = 0}^d p_j(n) f(n + j, k) = G(n, k + 1) - G(n, k),
\end{equation*}
where $G(n, k)$ is some function with $G(n, \pm \infty) = 0$. Summing over $k$
yields the recurrence
\begin{equation*}
    \sum_{j = 0}^d p_j(n) S(n + j) = G(n, \infty) - G(n, -\infty) = 0.
\end{equation*}
This method has been (rightly) advertised from here to the Moon and back. See
the article, \cite{whatis}, the book \cite{ab}, the lecture notes
\cite{recitations}, and the lively \emph{Monthly} article \cite{computer}.

The Almkvist--Zeilberger algorithm is to definite integrals what the
Wilf--Zeilberger method is to definite sums. The input to the algorithm is a
``suitable'' function $F_n(x)$ with a discrete parameter $n$. The output is a
linear recurrence operator $L(N, n)$ with polynomial coefficients in $N$ and
$n$, and function $R(n, x)$, rational in $n$ and $x$, such that
\begin{equation*}
    L(N, n) F_n(x) = \frac{d}{dx} R(n, x) F_n(x).
\end{equation*}
Explicitly, there is a nonnegative integer $d$ and polynomials $p_k(n)$ such
that
\begin{equation*}
    \sum_{k = 0}^d p_k(n) F_{n + k}(x) = \frac{d}{dx} R(n, x) F_n(x).
\end{equation*}
The left-hand side is independent of $x$ except for the $F_n(x)$, so
integrating this equation on $[0, 1]$, say, gives
\begin{equation*}
    L(N, n) \int_0^1 F_n(x) = R(n, 1) F_n(1) - R(n, 0) F_n(0).
\end{equation*}
If $F_n(0) = F_n(1) = 0$ and $R(n, x)$ is well-behaved, then $I(n) = \int_0^1
F_n(x)$ satisfies
\begin{equation*}
    L(N, n) I(n) = 0,
\end{equation*}
meaning that we have discovered a recurrence for the sequence of integrals
$I(n)$. The only thing to verify is that $F_n(x)$ is ``suitable,'' and that
$R(n, x)$ is well-behaved on the region of integration.

What functions are ``suitable''? The requirement is that $F_n(x)$ is
\emph{hypergeometric} in $n$ and $x$, meaning that there exist fixed rational
functions $R_1(n, x)$ and $R_2(n, x)$ such that
\begin{align*}
    F_{n + 1}(x) / F_n(x) &= R_1(n, x) \\
    F_n'(x) / F_n(x) &= R_2(n, x).
\end{align*}
This is all that the algorithm needs to produce its identity.

The version of the Almkvist--Zeilberger algorithm that we will use is
implemented in the procedure \texttt{AZd(f, x, n, N)} in the Maple package
\texttt{EKHAD} referenced in the introduction. It takes an expression $f$ in
the continuous variable $x$ and discrete parameter $n$. The symbol $N$ stands
for the ``shift'' operator $N$ on the set of sequences by
\begin{equation*}
    N a(n) = a(n + 1).
\end{equation*}
For example, the Fibonacci numbers $F(n)$ satisfy
\begin{equation*}
    (N^2 - N - 1) F(n) = 0.
\end{equation*}

Now, let us get on to the case studies.

\section{Factorials}%
\label{sub:factorials}

Let us begin humbly, by evaluating an integral that we already know.
\begin{proposition}
    For each integer $n \geq 0$,
        \begin{equation*}
            I(n) = \int_0^\infty e^{-x} x^n\ dx = n!.
        \end{equation*}
\end{proposition}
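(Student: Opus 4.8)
The plan is to follow exactly the template laid out in the quickstart guide: feed the integrand into the Almkvist--Zeilberger machinery, read off the recurrence operator $L(N, n)$ and the certificate $R(n, x)$, integrate the resulting exact identity over $[0, \infty)$, and match the recurrence against the known one for $n!$.

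First I would apply the AZ algorithm to the integrand $F_n(x) = e^{-x} x^n$. This function is manifestly hypergeometric in both $n$ and $x$, since $F_{n+1}(x)/F_n(x) = x$ and $F_n'(x)/F_n(x) = n/x - 1$ are both rational in $n$ and $x$, so it is ``suitable'' and the algorithm is guaranteed to return an operator and a certificate. I expect the algorithm to produce something very simple here --- most likely the first-order operator $N - (n+1)$ together with a polynomial certificate $R(n, x)$, reflecting the elementary identity $\frac{d}{dx}(e^{-x} x^{n+1}) = (n+1) e^{-x} x^n - e^{-x} x^{n+1}$. That is, I anticipate an exact relation of the form
\begin{equation*}
    (N - (n + 1)) e^{-x} x^n = \frac{d}{dx} R(n, x)\, e^{-x} x^n,
\end{equation*}
which is precisely the kind of output the guide promises.

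Next I would integrate this identity over $(0, \infty)$. By the fundamental theorem of calculus the right-hand side collapses to the boundary evaluation $[R(n, x) e^{-x} x^n]_0^\infty$, so the main thing to check --- and the step I expect to be the real (if modest) obstacle --- is that these boundary terms vanish. At $x = 0$ the factor $x^n$ kills the integrand for $n \geq 1$ (and one handles $n = 0$ directly), while at $x = \infty$ the exponential decay $e^{-x}$ dominates any polynomial in $x$ coming from $R(n, x)$, so both endpoints contribute zero. With the boundary terms gone, integrating leaves $(N - (n+1)) I(n) = 0$, i.e.\ the clean recurrence
\begin{equation*}
    I(n + 1) = (n + 1)\, I(n).
\end{equation*}

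Finally I would close the argument by the standard ``same recurrence, same initial condition'' principle used for the $I(n) = \frac{\pi}{4^n}\binom{2n}{n}$ example in the introduction. The sequence $n!$ satisfies the identical first-order recurrence $(n+1)! = (n+1)\cdot n!$, and a direct computation gives $I(0) = \int_0^\infty e^{-x}\,dx = 1 = 0!$. Since a first-order recurrence together with one initial value determines the sequence uniquely, $I(n) = n!$ for all $n \geq 0$. The only genuine content beyond turning the crank is the boundary-term verification, and even that is routine given the exponential decay.
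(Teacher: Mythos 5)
Your proposal is correct and follows exactly the paper's own route: run the Almkvist--Zeilberger algorithm on $e^{-x} x^n$ to get the operator $N - (n+1)$ with certificate $-x$ (your anticipated identity $\frac{d}{dx}(e^{-x} x^{n+1}) = (n+1)e^{-x}x^n - e^{-x}x^{n+1}$ is precisely the paper's equation), integrate over $[0,\infty)$ noting the boundary terms vanish, and conclude via the shared recurrence and the initial value $I(0) = 1$. Your added care about the endpoint behavior is a welcome touch the paper states only briefly, but the argument is the same.
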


\begin{proof}
    Type the following into Maple:
    \begin{verbatim}
            AZd(exp(-x) * x^n, x, n, N);
    \end{verbatim}
    This produces:
    \begin{verbatim}
            N - n - 1, -x
    \end{verbatim}
    That is, the Almkvist--Zeilberger algorithm has told us that
    \begin{equation}
        \label{factorial}
        (N - (n + 1)) f_n(x) = -\frac{d}{dx} e^{-x} x^{n + 1}.
    \end{equation}
    Since the antiderivative of the right-hand side vanishes for $x = 0$ and $x
    = \infty$, integrating on $[0, \infty)$ gives
    \begin{equation*}
        (N - (n + 1)) I(n) = 0,
    \end{equation*}
    and since $I(0) = 1$, we have $I(n) = n!$.
\end{proof}

\section{``A Complicated Integral''}

This is from Section~3.8 of \cite{integrals}.

\begin{proposition}
    \begin{equation*}
        I(n) = \int_0^\infty \frac{x^n}{(x + 1)^{n + r + 1}}\ dx = \left[ r {r + n \choose n} \right]^{-1}.
    \end{equation*}
\end{proposition}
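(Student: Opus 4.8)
The plan is to follow the exact same template established in the factorial example: feed the integrand to the AZ algorithm, read off the recurrence operator $L(N, n)$ and the rational certificate $R(n, x)$, verify that the boundary terms vanish, and then check that the claimed closed form satisfies the same first-order recurrence and initial condition.

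Concretely, first I would type
\begin{verbatim}
    AZd(x^n / (x + 1)^(n + r + 1), x, n, N);
\end{verbatim}
into Maple. Since the integrand is hypergeometric in both $n$ and $x$ (the ratios $F_{n+1}(x)/F_n(x) = x/(x+1)$ and $F_n'(x)/F_n(x) = n/x - (n+r+1)/(x+1)$ are rational), the algorithm is guaranteed to return a first-order operator $L(N, n) = p_1(n) N + p_0(n)$ together with a certificate $R(n, x)$ satisfying $L(N, n) F_n(x) = \frac{d}{dx}\bigl(R(n, x) F_n(x)\bigr)$. I expect the output to encode a recurrence equivalent to $I(n+1) = \frac{n + 1}{n + r + 1} I(n)$, since the right-hand side $\bigl[r\binom{r+n}{n}\bigr]^{-1} = \frac{n!\,\Gamma(r)}{\Gamma(r + n + 1)}$ manifestly satisfies precisely this ratio.

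Next I would integrate the AZ identity over $[0, \infty)$, which requires checking that the boundary term $R(n, x) F_n(x)$ vanishes at both endpoints. At $x = 0$, the factor $x^n$ kills it for $n \geq 1$ (assuming $R$ has no worse than a mild pole), and as $x \to \infty$ the integrand decays like $x^{-(r+1)}$, so for $r > 0$ the certificate times $F_n$ should vanish there as well; I would confirm the explicit form of $R(n, x)$ makes both limits zero. This reduces the AZ identity to the homogeneous recurrence $L(N, n) I(n) = 0$.

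Finally, having the first-order recurrence in hand, I would verify the base case $I(0) = \int_0^\infty (x+1)^{-(r+1)}\, dx = 1/r = \bigl[r\binom{r}{0}\bigr]^{-1}$ by direct evaluation, and then confirm that $\bigl[r\binom{r+n}{n}\bigr]^{-1}$ obeys the same recurrence, so that both sequences agree for all $n \geq 0$. The main obstacle I anticipate is purely bookkeeping at the boundary: because $r$ is a free parameter, I must make sure the decay conditions at $x = 0$ and $x = \infty$ genuinely hold for the relevant range of $r$ (presumably $r > 0$) and that the certificate $R(n, x)$ returned by \texttt{AZd} does not introduce a spurious pole that spoils vanishing at an endpoint. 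Everything else is a mechanical check that mirrors the factorial case.
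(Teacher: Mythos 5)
Your proposal is correct and follows essentially the same route as the paper: run \texttt{AZd} on the integrand, integrate the resulting first-order identity over $[0,\infty)$ noting the boundary term vanishes for $r > 0$ (the certificate is in fact $R(n,x) = x$, so the boundary term is $x^{n+1}/(x+1)^{n+r+1}$, which vanishes at both endpoints even for $n = 0$), and then match the recurrence $((n+1) - (n+r+1)N)I(n) = 0$ and the initial condition $I(0) = 1/r$ against the claimed closed form. The paper's proof is exactly this, stated more tersely.
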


\begin{proof}
    Type the following into Maple:
    \begin{verbatim}
        AZd(x^n / (x + 1)^(n + r + 1), x, n, N);
    \end{verbatim}
    This produces:
    \begin{verbatim}
        (n + 1) + (-n - r - 1) N, x
    \end{verbatim}
    And for $r > 0$, integrating the implied identity
    \begin{equation}
        ((n + 1) - (n + r + 1) N) \frac{x^n}{(x + 1)^{n + r + 1}} = \frac{d}{dx} x \frac{x^n}{(x + 1)^{n + r + 1}}
    \end{equation}
    yields
    \begin{equation*}
        ((n + 1) - (n + r + 1) N) I(n) = 0.
    \end{equation*}
    The sequence $(r {r + n \choose n})^{-1}$ satisfies the same recurrence and
    initial condition (check!).
\end{proof}

\section{Central binomial coefficients}%
\label{sub:central_binomial_coefficients}

\begin{proposition}
    The integral sequence
    \begin{equation*}
        I(n) = \int_0^1 (x (1 - x))^n\ dx
    \end{equation*}
    satisfies
    \begin{equation*}
        (N - \frac{n + 1}{2(2n + 3)}) I(n) = 0.
    \end{equation*}
\end{proposition}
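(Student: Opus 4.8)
The plan follows the exact pattern of the preceding case studies. The goal is to extract a first-order recurrence for $I(n) = \int_0^1 (x(1-x))^n\,dx$ and confirm it matches the stated recurrence.

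First I would apply the Almkvist--Zeilberger algorithm to the integrand $F_n(x) = (x(1-x))^n$ by typing \texttt{AZd((x * (1 - x))\^{}n, x, n, N);} into Maple. Since $F_n(x)$ is manifestly hypergeometric in $n$ and $x$ (both $F_{n+1}/F_n = x(1-x)$ and $F_n'/F_n = n(1 - 2x)/(x(1-x))$ are rational), the algorithm is guaranteed to return an operator $L(N,n)$ and a rational certificate $R(n,x)$ satisfying
\begin{equation*}
    L(N, n)\, F_n(x) = \frac{d}{dx} R(n, x) F_n(x).
\end{equation*}
I expect the output to be an operator of the form $a(n) + b(n)N$ together with a certificate $R(n,x)$ polynomial in $x$, encoding a first-order recurrence.

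Next I would integrate the resulting identity over $[0,1]$. The crucial point is that the boundary term $R(n,1)F_n(1) - R(n,0)F_n(0)$ must vanish. Here $F_n(x) = (x(1-x))^n$ satisfies $F_n(0) = F_n(1) = 0$ for every $n \geq 1$, so provided the certificate $R(n,x)$ has no poles at the endpoints—which I would verify from the explicit form returned—the boundary contribution is zero and we obtain $L(N,n)\,I(n) = 0$. This is the step most likely to require care: I must check that $R(n,x)$ is genuinely well-behaved at $x=0$ and $x=1$, and separately confirm the small-$n$ edge case, since $F_0(x) \equiv 1$ does not vanish at the endpoints (though a first-order recurrence only needs the vanishing for the shift relating $n$ and $n+1$).

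Finally, I would rewrite the homogeneous recurrence $L(N,n)I(n) = 0$ in the normalized form $\left(N - \frac{n+1}{2(2n+3)}\right)I(n) = 0$ by dividing through by the coefficient of $N$ and simplifying; this should match the stated recurrence directly. The main obstacle is not conceptual but bookkeeping: ensuring the algorithm's raw output, after clearing denominators, simplifies to exactly the claimed rational ratio $\frac{n+1}{2(2n+3)}$, which I would confirm by comparing against the classical Beta-function value $I(n) = \frac{(n!)^2}{(2n+1)!}$ to cross-check the recurrence.
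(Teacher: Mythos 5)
Your proposal is correct and follows essentially the same route as the paper: run \texttt{AZd} on $(x(1-x))^n$, integrate the resulting identity over $[0,1]$, and note that the boundary terms vanish. In fact the certificate the algorithm returns, $R(n,x) = (2x-1)(x-1)x$, is a polynomial vanishing at both endpoints, which settles your (well-placed) concerns about poles and the $n=0$ edge case at once.
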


\begin{proof}
    Type the following into Maple:
    \begin{verbatim}
            AZd((x * (1 - x))^n, x, n, N);
    \end{verbatim}
    This produces:
    \begin{verbatim}
            n + 1 + (-4 n - 6) N, (-1 + 2 x) (-1 + x) x
    \end{verbatim}
    Integrating the implied identity
    \begin{equation}
        \label{central}
        (N - \frac{n + 1}{2(2n + 3)}) (x (1 - x))^n = \frac{d}{dx} (2x - 1) (x - 1) x (x (1 - x))^n,
    \end{equation}
    on $[0, 1]$ yields the result, since the antiderivative of the right-hand
    side vanishes at $x = 0$ and $x = 1$.
\end{proof}

The recurrence implies that $I(n)$ begins as follows:
\begin{equation*}
    1/6,\ 1/30,\ 1/140,\ 1/630,\ 1/2772,\ 1/12012,\ 1/51480, \dots
\end{equation*}

\begin{corollary}
    \begin{equation*}
        I(n) = \frac{1}{(2n + 1) {2n \choose n}}
    \end{equation*}
\end{corollary}

\begin{proof}
    Both sequences satisfy the same recurrence and initial condition (check!).
\end{proof}

This fact gives us an interesting identity. One way to \emph{try} and evaluate
$I(n)$ is by applying the binomial theorem to the integrand:
\begin{align*}
    I(n) &= \int_0^1 x^n (1 - x)^n\ dx \\
         &= \int_0^1 \sum_{k = 0}^n {n \choose k} (-1)^k x^{n + k}\ dx \\
         &= \sum_{k = 0}^n {n \choose k} \frac{(-1)^k}{n + k + 1}.
\end{align*}
This remaining sum is complicated, but we can pair it with our previous corollary to get another.

\begin{corollary}
    \begin{equation*}
        \sum_{k = 0}^n {n \choose k} \frac{(-1)^k}{n + k + 1} = \frac{1}{(2n + 1) {2n \choose n}}
    \end{equation*}
\end{corollary}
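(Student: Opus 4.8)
The plan is to notice that both sides of the claimed identity are simply two different evaluations of the \emph{same} integral $I(n) = \int_0^1 (x(1-x))^n\, dx$, so the corollary falls out by transitivity with essentially no additional work. The left-hand side is precisely the sum produced by expanding $(x(1-x))^n = x^n (1-x)^n$ with the binomial theorem and integrating term by term, exactly as carried out in the display just above the statement; this shows $\sum_{k=0}^n {n \choose k} \frac{(-1)^k}{n+k+1} = I(n)$. The previous corollary already established $I(n) = \frac{1}{(2n+1){2n \choose n}}$. Chaining these two evaluations of $I(n)$ immediately gives the stated identity.

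The one place to be careful is the termwise integration, but here there is no genuine obstacle: the sum runs over the finite range $0 \le k \le n$, so interchanging $\int_0^1$ with $\sum_k$ is automatic and requires no convergence argument. Each resulting piece is the elementary $\int_0^1 x^{n+k}\, dx = \frac{1}{n+k+1}$, and the factors ${n \choose k}$ and $(-1)^k$ simply ride along. In short, the ``hard part'' is that there is no hard part—this is exactly the sort of corollary the paper is advertising, where the labor was front-loaded into the AZ computation of $I(n)$.

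If one wanted a proof that did not borrow the integral representation, the honest alternative would be to attack the sum directly by the discrete Wilf--Zeilberger method: set $f(n,k) = {n \choose k}(-1)^k/(n+k+1)$, divide by the conjectured right-hand side to form a normalized summand $F(n,k)$, and search for a certificate $G(n,k)$ satisfying $F(n+1,k) - F(n,k) = G(n,k+1) - G(n,k)$, so that summing over $k$ telescopes the right side to zero. This is the exact discrete mirror of the AZ calculation used for the integral. The only mild obstacle in that route is producing the certificate $G$, but since the summand is a textbook hypergeometric term this step is entirely mechanical. Given the setup of the paper, though, the transitivity argument is both the shortest and the most fitting, so that is the route I would take.
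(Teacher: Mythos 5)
Your proposal is correct and is exactly the paper's argument: expand $(x(1-x))^n$ by the binomial theorem, integrate the finite sum termwise to identify the left-hand side with $I(n)$, and chain this with the preceding corollary's evaluation $I(n) = \frac{1}{(2n+1)\binom{2n}{n}}$. The alternative WZ route you sketch is a fine aside, but the transitivity argument you chose is precisely what the paper does.
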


\section{Irrationality and Euler's Constant}
\label{sub:euler_s_constant}

Our final case study is a slightly more complicated sequence of integrals. We
will not be able to derive a closed form, but we will derive a wealth of other
information.

\begin{proposition}
    The integral sequence
    \begin{equation*}
        I(n) = \int_0^1 (x (1 - x))^n e^{-x}\ dx
    \end{equation*}
    satisfies
    \begin{equation*}
        (N^2 + 2 (2n + 3) (n + 2) N - (n + 1)(n + 2)) I(n) = 0.
    \end{equation*}
\end{proposition}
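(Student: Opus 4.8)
The plan is to follow the exact same recipe used in all the preceding case studies, just with a second-order operator this time. The integrand here is $F_n(x) = (x(1-x))^n e^{-x}$, which is manifestly hypergeometric in $n$ and $x$: the ratio $F_{n+1}(x)/F_n(x) = x(1-x)$ is rational (in fact polynomial), and $F_n'(x)/F_n(x) = n(1-2x)/(x(1-x)) - 1$ is rational as well. So the function is ``suitable'' and the Almkvist--Zeilberger algorithm is guaranteed to return an operator $L(N,n)$ together with a rational certificate $R(n,x)$.

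First I would run \texttt{AZd((x * (1 - x))\^{}n * exp(-x), x, n, N);} in Maple and read off the two pieces of output: the operator, which should be $N^2 + 2(2n+3)(n+2)N - (n+1)(n+2)$, and the certificate $R(n,x)$. These two objects assert the pointwise identity
\begin{equation*}
    \bigl(N^2 + 2(2n+3)(n+2)N - (n+1)(n+2)\bigr) F_n(x) = \frac{d}{dx}\, R(n,x)\, F_n(x).
\end{equation*}
Next I would integrate both sides over $[0,1]$. The left side becomes exactly $L(N,n) I(n)$ because the operator acts only on the $n$-dependence. The right side telescopes to the boundary evaluation $R(n,1)F_n(1) - R(n,0)F_n(0)$.

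The main obstacle---really the only point requiring genuine verification---is checking that this boundary term vanishes, since that is what turns the pointwise identity into a homogeneous recurrence for $I(n)$. Here we are in good shape: the factor $(x(1-x))^n$ already vanishes at both $x=0$ and $x=1$ for $n \geq 1$, and $e^{-x}$ is bounded, so as long as the rational certificate $R(n,x)$ does not introduce a pole at the endpoints, the product $R(n,x)F_n(x)$ dies at $x=0$ and $x=1$. Since $R(n,x)$ is polynomial in $x$ divided by a power of $x(1-x)$, I would confirm that the vanishing order of $(x(1-x))^n$ dominates any endpoint pole of $R$; for $n$ large enough this is automatic, and the finitely many small cases can be checked directly against the stated recurrence.

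Assuming the boundary term vanishes, we conclude
\begin{equation*}
    \bigl(N^2 + 2(2n+3)(n+2)N - (n+1)(n+2)\bigr) I(n) = 0,
\end{equation*}
which is the claimed recurrence. Unlike the earlier propositions, I would \emph{not} expect to match $I(n)$ against a known closed form, consistent with the remark that no closed form will be available; the recurrence itself is the deliverable, and its second order reflects that $I(n)$ now genuinely depends on two independent pieces of initial data rather than collapsing to a single hypergeometric term.
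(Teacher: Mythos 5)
Your proposal is correct and follows essentially the same route as the paper: run the Almkvist--Zeilberger algorithm on $(x(1-x))^n e^{-x}$, integrate the resulting certificate identity over $[0,1]$, and observe that the boundary terms vanish. Your cautious treatment of possible endpoint poles of $R(n,x)$ turns out to be unnecessary, since the certificate the algorithm actually returns, $R(n,x) = -x^4 - (2n+2)x^3 + (3n+5)x^2 - (n+2)x$, is a polynomial vanishing at both $x=0$ and $x=1$, so the boundary evaluation is zero for every $n \geq 0$ with no small-case checking required.
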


\begin{proof}
    Let $f_n(x)$ be the integrand. The Almkvist--Zeilberger algorithm produces
    the ``calculus exercise''
    \begin{align}
        \label{euler}
        \begin{split}
            (N^2 + 2(2n + 3) (n + 2) N &- (n + 1)(n + 2) f_n(x)
                \\ &= \frac{d}{dx} (-2n x^3 - x^4 + 3 n x^2 - 2x^3 - nx + 5 x^2 - 2x) f_n(x),
        \end{split}
    \end{align}
    and integrating this proves the proposition.
\end{proof}

The recurrence is hopelessly complicated; we probably won't be able to solve
it. But it does produce the following initial terms:
\begin{equation*}
    -1+\frac{3}{e},\ 14 - \frac{38}{e},\ -426 + \frac{1158}{e},\ 24024 - \frac{65304}{e}, \dots
\end{equation*}
This data is very suggestive! It leads us to conjecture that
\begin{equation*}
    I(n) = a_n + b_n e^{-1}
\end{equation*}
for some \emph{integers} $a_n$ and $b_n$. This is true by virtue of the
recurrence: if $I(n) = a_n + b_n e^{-1}$ and $I(n + 1) = a_{n + 1} + b_{n + 1}
e^{-1}$, then
\begin{align*}
    I(n + 2) &= -2(2n + 3)(n + 2) I(n + 1) + (n + 1)(n + 2) I(n) \\
             &= -2(2n + 3)(n + 2) (a_{n + 1} + b_{n + 1} e^{-1}) + (n + 1)(n + 2) (a_n + b_n e^{-1}) \\
             &= a_{n + 2} + b_{n + 2} e^{-1},
\end{align*}
where we take
\begin{align*}
    a_{n + 2} &= -2(2n + 3)(n + 2) a_{n + 1} + (n + 1)(n + 2) a_n \\
    b_{n + 2} &= -2(2n + 3)(n + 2) b_{n + 1} + (n + 1)(n + 2) b_n.
\end{align*}
That is, $a_n$ and $b_n$ are sequences of integers which satisfy the \emph{same
recurrence} that $I(n)$ satisfies, only the initial conditions are different:
\begin{align*}
    a_1 = -1 &\quad a_2 = 14 \\
    b_1 = 3 &\quad b_2 = -38.
\end{align*}
Better yet, note that
\begin{align*}
    -\frac{a_4}{b_4} &= \frac{24024}{65304} \\
                     &= 0.36787945\dots \\
                     &\approx e^{-1}.
\end{align*}
That is, $-a_n / b_n$ seems to be a good approximation to $e^{-1}$!

To see why this is, we must go back to the initial integral. For $0 \leq x \leq
1$, we have $x(1 - x) \leq 1/4$, therefore
\begin{equation*}
    0 \leq I(n) = \int_0^1 e^{-x} (x (1 - x))^n\ dx \leq \frac{1}{4^n} \int_0^1 e^{-x}\ dx,
\end{equation*}
which shows that $I(n)$ goes to zero exponentially quickly. Therefore
\begin{equation*}
    |a_n + b_n e^{-1}| \to 0
\end{equation*}
exponentially quickly, meaning that
\begin{equation*}
    |\frac{a_n}{b_n} + e^{-1}| = |(-\frac{a_n}{b_n}) - e^{-1}| \to 0.
\end{equation*}
In words, $-a_n / b_n$ gives an exponentially-good rational approximation of
$e^{-1}$. To double check, we can use our recurrence to compute $a_{20} /
b_{20}$:
\begin{align*}
    -\frac{a_{20}}{b_{20}} &= \frac{493294164866383351699429534601141833239920640000}{1340912564441170249019237618446466016434749440000} \\
                          &=       0.3678794411714423215955237701614608674\dots \\
                          &\approx 0.367879441171442321595523770161460867445\dots \\
                          &= e^{-1}.
\end{align*}
\emph{Better still}, this remarkable approximation $-a_n / b_n \approx e^{-1}$
is \emph{too good to be true} in the following sense.

\begin{proposition}
    Let $\alpha$ be a real number. If there exist sequences of integers $a_n$
    and $b_n$ such that $|b_n| \to \infty$ and
    \begin{equation*}
        |\alpha - \frac{a_n}{b_n}| \leq \frac{C}{|b_n|^{1 + \delta}}
    \end{equation*}
    for some positive constants $C$ and $\delta$, then $\alpha$ is irrational.
\end{proposition}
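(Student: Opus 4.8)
The plan is to prove the contrapositive: assume $\alpha$ is rational and derive a contradiction with the approximation hypothesis. This is the classic irrationality criterion, essentially a quantitative version of the statement that rationals are "badly approximable" by other rationals.

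So first I would write $\alpha = p/q$ for integers $p$ and $q$ with $q > 0$. The key observation is that whenever $a_n / b_n \neq \alpha$, the difference $\alpha - a_n/b_n = p/q - a_n/b_n = (p b_n - q a_n)/(q b_n)$ is a nonzero rational number whose numerator $p b_n - q a_n$ is a nonzero integer, hence has absolute value at least $1$. This gives the lower bound
\begin{equation*}
    \left| \alpha - \frac{a_n}{b_n} \right| = \frac{|p b_n - q a_n|}{q |b_n|} \geq \frac{1}{q |b_n|}.
\end{equation*}
Next I would combine this lower bound with the hypothesized upper bound $|\alpha - a_n/b_n| \leq C / |b_n|^{1 + \delta}$ to obtain
\begin{equation*}
    \frac{1}{q |b_n|} \leq \frac{C}{|b_n|^{1 + \delta}},
\end{equation*}
which rearranges to $|b_n|^{\delta} \leq C q$. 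Since the right-hand side is a fixed constant but $|b_n| \to \infty$ forces $|b_n|^\delta \to \infty$ (as $\delta > 0$), this is a contradiction for large $n$.

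The one genuine subtlety, and the step I expect to require the most care, is handling the possibility that $a_n / b_n = \alpha$ for some $n$ — in that case the numerator $p b_n - q a_n$ is zero and the lower bound fails. I would dispatch this by noting that if $\alpha = a_n/b_n$ held for infinitely many $n$, then we could restrict to that subsequence; but on that subsequence the relation $p b_n = q a_n$ pins down $a_n/b_n = p/q$ exactly, which is fine, yet we still need the contradiction to arise. The clean fix is to observe that the approximation inequality must hold for \emph{all} $n$, and since $|b_n| \to \infty$ we may pass to the tail where $|b_n|^\delta > Cq$; at any such $n$ with $a_n/b_n \neq \alpha$ we get an immediate contradiction, while if $a_n/b_n = \alpha$ for all large $n$ then $\alpha$ is visibly rational equal to a fixed ratio, which we may simply exclude by absorbing it into the rational case directly. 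In short, the argument is short and robust; the only thing to state carefully is the degenerate equality case, after which the inequality chain closes the proof.
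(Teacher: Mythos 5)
Your main line of argument is exactly the paper's proof: assume $\alpha = p/q$, use the fact that a nonzero integer has absolute value at least $1$ to get the lower bound $|\alpha - a_n/b_n| = |p b_n - q a_n|/(q|b_n|) \geq 1/(q|b_n|)$, and play this against the hypothesized upper bound to obtain $|b_n|^\delta \leq Cq$, which is impossible once $|b_n| \to \infty$. The paper does precisely this, merely hiding the constant $1/q$ inside a $C'$.

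The difference is the degenerate case you flagged, where $a_n/b_n = \alpha$ and the numerator $p b_n - q a_n$ vanishes. You were right to worry about it, but your fix does not work: you say that if $a_n/b_n = \alpha$ for all large $n$ then ``$\alpha$ is visibly rational \dots\ which we may simply exclude by absorbing it into the rational case directly.'' That is circular --- in the contrapositive you have \emph{already assumed} $\alpha$ is rational, and in this case none of the hypotheses are violated, so there is no contradiction to be had and nothing to absorb. In fact the proposition as stated is false in exactly this case: take $\alpha = 1/2$, $a_n = n$, $b_n = 2n$; then $|b_n| \to \infty$ and $|\alpha - a_n/b_n| = 0 \leq C/|b_n|^{1 + \delta}$ for any positive $C$ and $\delta$, yet $\alpha$ is rational. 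The correct resolution is to amend the \emph{statement}, not the proof: one must additionally require $a_n/b_n \neq \alpha$ (equivalently $b_n \alpha - a_n \neq 0$) for all large $n$, which is how the irrationality criterion is classically phrased. That extra hypothesis costs nothing in the paper's application, since there $I(n) = a_n + b_n e^{-1}$ is a strictly positive integral. Note that the paper's own proof has the same unacknowledged gap --- its lower bound $\geq C'/|b_n|$ is only valid when $b_n a - b a_n \neq 0$ --- so your instinct was sharper than the source; it just cannot be converted into a proof of the proposition as literally stated.
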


\begin{proof}
    If $\alpha = a / b$ is rational, then
    \begin{equation*}
        |\alpha - \frac{a_n}{b_n}| = \frac{|(b_n a - b a_n) / b|}{|b_n|} \geq \frac{C'}{|b_n|}
    \end{equation*}
    for some positive constant $C'$. But the inequality
    \begin{equation*}
        \frac{C'}{|b_n|} \leq \frac{C}{|b_n|^{1 + \delta}}
    \end{equation*}
    is impossible if $|b_n| \to \infty$.
\end{proof}

This fact together with our approximation $-a_n / b_n \approx e^{-1}$ gives us
an unnecessarily complicated proof that $e$ is irrational.

\begin{proposition}
    $e$ is irrational with $\delta = 1$.
\end{proposition}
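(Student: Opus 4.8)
The plan is to apply the preceding irrationality criterion to $\alpha = e^{-1}$; since $e = p/q$ would force $e^{-1} = q/p$, irrationality of $e^{-1}$ is the same as irrationality of $e$. The natural approximants are $-a_n/b_n$, for which the error is exactly
\begin{equation*}
    \left| e^{-1} - \left(-\frac{a_n}{b_n}\right) \right| = \frac{|a_n + b_n e^{-1}|}{|b_n|} = \frac{I(n)}{|b_n|}.
\end{equation*}
There is a catch I must confront: the data show $|b_n|$ grows like $(2n)!$, and one checks that $\frac{I(n)}{|b_n|}\,|b_n|^{1+\delta} \to \infty$ for every $\delta > 0$, so the raw pair $(a_n, b_n)$ satisfies the criterion for no $\delta$ at all. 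The remedy is to notice that both $a_n$ and $b_n$ secretly carry a factor of $n!$ and to divide it out.

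Concretely, I would set $\tilde a_n = a_n/n!$ and $\tilde b_n = b_n/n!$. Substituting $b_n = n!\,\tilde b_n$ into the recurrence of the proposition collapses the growing coefficients and leaves the tame recurrence
\begin{equation*}
    \tilde b_{n+2} = -2(2n+3)\,\tilde b_{n+1} + \tilde b_n,
\end{equation*}
with integer coefficients and integer seeds $\tilde b_1 = 3$, $\tilde b_2 = -19$ (and identically for $\tilde a_n$, with $\tilde a_1 = -1$, $\tilde a_2 = 7$). Thus $\tilde a_n$ and $\tilde b_n$ are integers, dividing $I(n)/n! = \tilde a_n + \tilde b_n e^{-1}$ shows $-\tilde a_n/\tilde b_n$ still approximates $e^{-1}$ with the same error $I(n)/|b_n|$, and a short induction on signs gives $|\tilde b_{n+2}| = 2(2n+3)|\tilde b_{n+1}| + |\tilde b_n|$, so that $|\tilde b_n| \to \infty$ as the criterion requires.

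It remains to bound the error, and the point is that a decay estimate for $I(n)$ and a growth estimate for $|b_n|$ must be matched sharply. For decay, since $e^{-x} \le 1$ on $[0, 1]$ I would invoke the elementary integral already evaluated in the central-binomial section,
\begin{equation*}
    I(n) \le \int_0^1 (x(1-x))^n\,dx = \frac{1}{(2n+1)\binom{2n}{n}} = \frac{(n!)^2}{(2n+1)!};
\end{equation*}
the cruder bound $I(n) \le (1 - e^{-1})/4^n$ loses a factor of $\sqrt{n}$ and is not good enough. For growth, I would prove $|b_n| \le (2n+1)!$ by induction on the reduced recurrence: after dividing through by $(2n+3)!/(n+1)!$ the inductive step reduces to the transparent inequality $2(2n+3) + \tfrac{1}{2(2n+3)} \le 2(2n+5)$. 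Feeding both bounds into the error gives
\begin{equation*}
    \left| e^{-1} + \frac{\tilde a_n}{\tilde b_n} \right| = \frac{I(n)}{|b_n|} \le \frac{(n!)^2}{(2n+1)!\,|b_n|} \le \frac{(n!)^2}{|b_n|^2} = \frac{1}{|\tilde b_n|^2},
\end{equation*}
which is exactly the hypothesis of the criterion with $C = 1$ and $\delta = 1$.

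The main obstacle is this final matching. Irrationality by itself is cheap: once the hidden $n!$ is removed, even the crude exponential bound on $I(n)$ yields the criterion for any $\delta < 1$. Landing precisely at $\delta = 1$ is what forces the careful bookkeeping—recognizing that $|b_n|$ is of exact order $(2n+1)!$ and that $I(n)$ decays at exact order $(n!)^2/(2n+1)!$, so that the two conspire to cancel to a clean $1/|\tilde b_n|^2$.
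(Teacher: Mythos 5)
Your proof is correct, and it takes a genuinely different route from the paper's---indeed, it repairs the paper's argument rather than paralleling it. The paper keeps the raw integer pair $(a_n, b_n)$, bounds $I(n) \le C/4^n$, and then, citing the Wimp--Zeilberger asymptotic theory (the recurrence's leading coefficient as a polynomial in $n$ is $4N-1$, with root $N = 1/4$), asserts that $1/4^n \le C'/|b_n|$, from which the exponent $2$ follows at once. Read literally, that assertion says $|b_n| \le C' 4^n$, which is false: as you note, $b_n$ is a dominant solution of the recurrence, of size $(2n+1)!$ up to polynomial factors, and consequently the raw pair satisfies the criterion's hypothesis for no $\delta > 0$ whatsoever. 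Your normalization $\tilde a_n = a_n/n!$, $\tilde b_n = b_n/n!$ is exactly the missing step: the collapsed recurrence $\tilde b_{n+2} = -2(2n+3)\tilde b_{n+1} + \tilde b_n$ yields integrality and, via the sign-alternation induction, $|\tilde b_n| \to \infty$; and your two matched bounds, $I(n) \le (n!)^2/(2n+1)!$ (the central-binomial corollary earlier in the paper) and $|b_n| \le (2n+1)!$ (your induction---whose step is indeed the transparent inequality $2(2n+3) + \tfrac{1}{2(2n+3)} \le 2(2n+5)$, though the quantity divided out is $(n+2)\,(2n+3)!$, not $(2n+3)!/(n+1)!$), mesh to give exactly $1/|\tilde b_n|^2$, i.e.\ $C = 1$ and $\delta = 1$. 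As for what each approach buys: the paper's asymptotic-theory viewpoint, when applied to properly normalized sequences, is automatic and scales to hard cases (it is the engine behind the Zeilberger--Zudilin bound on $\mu(\pi)$); your argument is elementary and self-contained, with every estimate an explicit induction, and it makes visible precisely why the exponent lands at $\delta = 1$---the factor of $\sqrt{n}$ lost by the crude bound $I(n) \le C/4^n$ is recovered by the sharp Beta-integral bound.
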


\begin{proof}
    Let $a_n$ and $b_n$ be the approximating sequences induced by
    \begin{equation*}
        I(n) = \int_0^1 e^{-x} (x(1 - x))^n\ dx.
    \end{equation*}
    We have
    \begin{equation*}
        |a_n + b_n e^{-1}| \leq \frac{1}{4^n} \int_0^1 e^{-x} = \frac{C}{4^n}.
    \end{equation*}
    The sequence $b_n$ satisfies the recurrence
    \begin{equation*}
        (N^2 + 2 (2n + 3) (n + 2) N - (n + 1)(n + 2)) I(n) = 0.
    \end{equation*}
    It turns out---see \cite{asymptotics}---that this reveals considerable
    asymptotic information about $b_n$. In particular, if we rewrite the
    recurrence as a polynomial in $n$, the leading coefficient is $4N - 1$. The
    only solution to $4N - 1 = 0$ is $N = 1/4$, and this implies that $1/4^n
    \leq C' \frac{1}{|b^n|}$ for some constant $C'$. Thus
    \begin{equation*}
        |a_n + b_n e^{-1}| \leq \frac{C'}{|b_n|},
    \end{equation*}
    or
    \begin{equation*}
        |\frac{a_n}{b_n} + e^{-1}| \leq \frac{C'}{|b_n|^{1 + \delta}},
    \end{equation*}
    where $\delta = 1$. The claim follows from the previous proposition.
\end{proof}

Proving that $e$ is irrational is an easy exercise, but our proof gives a
\emph{quantitative measure} on the irrationality of $e$. Given a real $\alpha$,
the \emph{irrationality measure} of $\alpha$, denoted $\mu(\alpha)$ is defined
to be the smallest real $\mu$ such that
\begin{equation*}
    \left| \alpha - \frac{p}{q} \right| > \frac{1}{q^{\mu + \epsilon}}
\end{equation*}
holds for any $\epsilon > 0$ and all integers $p$ and $q$ with $q$ sufficiently
large. A number is irrational iff it has irrationality measure $> 1$. In fact,
irrationality measures are $1$ for rational numbers, $2$ for algebraic numbers,
and $\geq 2$ for transcendental numbers. It is well-known that almost every
number has irrationality measure $2$, but it is notoriously difficult to
\emph{prove} this for naturally ocurring, \emph{specific} constants.

Our proof implies the obvious lower bound $\mu(e) \geq 1 + \delta = 2$. With a
little more work---see \cite{poorten}---we can show that $\mu(e) \leq 1 + 1 /
\delta = 2$, so $\mu(e) = 2$.

It has become a \emph{game} to provide better and better upper bounds for the
irrationality measure of famous constants. For example, $\mu(\pi) \geq 2$ since
$\pi$ is transcendental, but we do not know the exact value. The current
``world record'' upper bound is held by Zeilberger and Zudilin, who showed in
\cite{pi} that
\begin{equation*}
    \mu(\pi) \leq 7.103205334137\dots
\end{equation*}
Ignoring the many technical details, their proof is very similar to ours. The
basic idea is to find a rapidly-decaying sequence $I(n)$ such that
\begin{equation*}
    I(n) = a_n + \pi b_n
\end{equation*}
for integers $a_n$ and $b_n$, then show that $b_n$ has nice asymptotic
properties. This style of proof was notably used by Fritz Beukers in
\cite{beukers} where he elegantly proved that the constants
\begin{equation*}
    \zeta(2) = \sum_{k \geq 1} \frac{1}{k^2}
\end{equation*}
and
\begin{equation*}
    \zeta(3) = \sum_{k \geq 1} \frac{1}{k^3}
\end{equation*}
are irrational by considering integrals of the form
\begin{equation*}
    \int_0^1 \int_0^1 \frac{x^n y^n (1 - x)^n (1 - y)^n}{1 - xy}\ dx\ dy
\end{equation*}
and
\begin{equation*}
    \int_0^1 \int_0^1 \int_0^1
        \frac{x^n y^n z^n (1 - x)^n (1 - y)^n (1 - z)^n}{(1 - (1 - xy)z)^{n + 1}}\ dx\ dy\ dz,
\end{equation*}
respectively. (The irrationality of $\zeta(3)$ was first shown in stunning
fashion by Roger Ap\'ery; see \cite{poorten} and \cite{apery}.)

To find their approximating sequences, Zeilberger and Zudilin tweaked integrals
similar to the ones above, adding parameters to the integrands and performing
an exhaustive computer search to find those parameters which gave the
empirically best upper bound. This method continues to provide possible avenues
for constructive irrationality proofs; see \cite{me} and \cite{automatic}.

It is too late for us to become famous proving that $\zeta(3)$ is irrational.
In fact, nothing we have done here is ``new'' or ``groundbreaking.'' We should
be content to have some new tools to play with. But you never know: One day you
might just plug the right integrand into the Almkvist--Zeilberger algorithm to
prove that
\begin{center}
    (FAMOUS CONSTANT)
\end{center}
is irrational.

Until then, have fun!

\end{document}